\newcommand{\qdn}{\hspace*{-1.5mm}}
\newcommand{\qqdn}{\hspace*{-2.5mm}}
\newcommand{\xqdn}{\hspace*{-5.0mm}}
\newcommand{\fns}{\footnotesize}
\newcommand{\ffnk}[4]{\left[\qdn\ba{#1}#3\\#4\ea{\!\Big|\:#2}\right]}
\newcommand{\binm}{\binom}
\newcommand{\nnm}{\nonumber}
\newcommand{\be}{\begin{equation}}
\newcommand{\ee}{\end{equation}}
\newcommand{\ba}{\begin{array}}
\newcommand{\ea}{\end{array}}
\newcommand{\bmn}{\begin{eqnarray}}
\newcommand{\emn}{\end{eqnarray}}
\newcommand{\bnm}{\begin{eqnarray*}}
\newcommand{\enm}{\end{eqnarray*}}
\newcommand{\bln}{\begin{subequations}}
\newcommand{\eln}{\end{subequations}}
\newtheorem{thm}{Theorem}
\newtheorem{corl}[thm]{Corollary}
\newtheorem{entry}{Entry}
\newcommand{\bbtm}[4]{\bibitem{kn:#1}{#2,}~{#3,}~{#4.}}
\newcommand{\cito}[1]{\cite{kn:#1}}
\newcommand{\citu}[2]{\cite[#2]{kn:#1}}
\begin{document} 
{\fns
\title{A new proof of Andrews' conjecture \\for $_4\phi_3$-series}

\author{$^a$Chuanan Wei, $^b$Xiaoxia Wang}
\dedicatory{
$^A$Department of Information Technology\\
  Hainan Medical College, Haikou 571199, China\\
  $^B$Department of Mathematics\\
  Shanghai University, Shanghai 200444, China}
\thanks{\emph{Email addresses}:
      weichuanan78@163.com (C. Wei), xwang 913@126.com (X. Wang)}

\address{ }
\footnote{\emph{2010 Mathematics Subject Classification}: Primary
05A19 and Secondary 33D15.}

\keywords{Basic hypergeometric series; Catalan numbers; Andrews'
conjecture for $_4\phi_3$-series}

\begin{abstract}
In terms of Sear's transformation formula for $_4\phi_3$-series, we
give new proofs of a summation formula for ${_4\phi_3}$-series due
to Andrews \cito{andrews-b} and another summation formula for
${_4\phi_3}$-series conjectured in the same paper. Meanwhile, other
several related results are also derived.
 \end{abstract}

\maketitle\thispagestyle{empty}
\markboth{Chuanan Wei, Xiaoxia Wang}
         {A new proof of Andrews' conjecture for $_4\phi_3$-series}

\section{Introduction}

 For two complex numbers $x$ and
$q$, define the $q$-shifted factorial by
 \[(x;q)_0=1\quad\text{and}\quad (x;q)_n=\prod_{i=0}^{n-1}(1-xq^i)
  \quad\text{when}\quad n\in \mathbb{N}.\]
The fraction form of it reads as
\[\qqdn\qdn\ffnk{ccccc}{q}{a,&b,&\cdots,&c}{\alpha,&\beta,&\cdots,&\gamma}_n
=\frac{(a;q)_n(b;q)_n\cdots(c;q)_n}{(\alpha;q)_n(\beta;q)_n\cdots(\gamma;q)_n}.\]
 Following Gasper and Rahman \cito{gasper}, define the basic hypergeometric series by
\[_{1+r}\phi_s\ffnk{cccc}{q;z}{a_0,&a_1,&\cdots,&a_r}
{&b_1,&\cdots,&b_s}
 =\sum_{k=0}^\infty
\ffnk{ccccc}{q}{a_0,&a_1,&\cdots,&a_r}{q,&b_1,&\cdots,&b_s}_kz^k,\]
where $\{a_i\}_{i\geq0}$ and $\{b_j\}_{j\geq1}$ are complex
parameters such that no zero factors appear in the denominators of
the summand on the right hand side.  Then sear's transformation
formula for $_4\phi_3$-series (cf. \citu{gasper}{Equation (2.10.4)})
can be expressed as
 \bmn\qquad \label{sear-trans}
{_4\phi_3}\ffnk{ccccc}{q;q}{q^{-n},a,b,c}{d,e,q^{1-n}abc/de}
=\ffnk{ccccc}{q}{d/a,de/bc}{d,de/abc}_n
{_4\phi_3}\ffnk{ccccc}{q;q}{q^{-n},a,e/b,e/c}{e,de/bc,q^{1-n}a/d}.
 \emn

Recall Shapilo's identity (cf. \citu{koshy}{Equation (5.12)}):
\[\sum_{k=0}^nC_{2k}C_{2n-2k}=4^nC_n,\]
where $C_n=\frac{1}{n+1}\binm{2n}{n}$ are Catalan numbers. A deep
extension of the last formula due to Andrews \cito{andrews-b} can be
stated as
  \bmn \label{andrews-a}
{_4\phi_3}\ffnk{ccccc}{q;q}{q^{-n},a,b,q^{1/2-n}/ab}
 {q^{1-n}/a,q^{1-n}/b,q^{1/2}ab}
  =q^{-n/2}\ffnk{ccccc}{q}{ab}{a,b}_n\ffnk{ccccc}{q^{1/2}}{a,b,-q^{1/2}}{ab}_n.
 \emn
In the same paper, Andrews made the following conjecture:
 \bmn \label{andrews-b}
&&{_4\phi_3}\ffnk{ccccc}{q;q}{q^{-n},a,b,q^{3/2-n}/ab}
 {q^{1-n}/a,q^{2-n}/b,q^{1/2}ab} \nnm\\\nnm
  &&\:\:=\:\ffnk{ccccc}{q}{ab/q}{a,b/q}_n\ffnk{ccccc}{q^{1/2}}{a,b/q,-q^{1/2}}{ab/q}_n
 \frac{(q^{1/2}-ab)(q-bq^{n/2})}{q^{3/2}-ab^2q^n}\\
  &&\:\:\times\:\,\bigg\{\frac{q^{1-n/2}}{q-b}+\frac{ab(q^{1/2}-bq^{n/2})(q-abq^{n})}
   {(q^{1/2}-b)(q^{1/2}-abq^{n})(q-abq^{n/2})}\bigg\}.
 \emn

 Recently, Guo \cito{guo} confirmed \eqref{andrews-a} and
 \eqref{andrews-b} in accordance with the difference method.
A computer proof of \eqref{andrews-b} was also offered by Mu
\cito{mu} after reading a previous version of \cito{guo}. Inspired
by these work, we shall give not only new proofs of the two
 identities just mentioned but also several related results by using \eqref{sear-trans}.
\section{New proofs of Andrews' two summation formulas for $_4\phi_3$-series}
\begin{thm}\label{thm-a} For two complex numbers $\{a,b\}$ and a
nonnegative integer $n$, there holds
  \bnm
{_4\phi_3}\ffnk{ccccc}{q;q}{q^{-n},a,b,q^{1/2-n}/ab}
 {q^{1-n}/a,q^{-n}/b,q^{1/2}ab}
  =\ffnk{ccccc}{q}{ab}{a,qb}_n\ffnk{ccccc}{q^{1/2}}{a,q^{1/2}b,-q^{1/2}}{ab}_n.
 \enm
\end{thm}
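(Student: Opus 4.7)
The plan is to apply Sear's transformation formula \eqref{sear-trans} to the left-hand side of Theorem \ref{thm-a}. The first observation is that the ${_4\phi_3}$ on the left is already in the Saalsch\"utzian shape required by \eqref{sear-trans}: on setting the upper parameters $(A,B,C)=(a,b,q^{1/2-n}/(ab))$ and the first two lower parameters $(d,e)=(q^{1-n}/a,q^{-n}/b)$, the third lower parameter $q^{1/2}ab$ coincides exactly with the prescribed value $q^{1-n}ABC/de$. Thus \eqref{sear-trans} is directly applicable, and the Sear's prefactor $(d/a,de/bc;q)_n/(d,de/abc;q)_n$ produces the rational $q$-base factor on the right-hand side.

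The next step is to exploit the freedom to permute $\{A,B,C\}$ and to interchange $d$ with $e$, aiming for a transformed ${_4\phi_3}$-series whose parameters have a clean multiplicative structure relative to the base $q^{1/2}$. The structural feature to produce is a pair of upper parameters of the form $x,\,xq^{\pm1/2}$ together with a lower parameter $x^{2}$: the base-change identities
\[(x;q)_k(xq^{1/2};q)_k=(x;q^{1/2})_{2k},\qquad (x^2;q)_k=(x;q^{1/2})_k(-x;q^{1/2})_k\]
then generate the characteristic $q^{1/2}$-base Pochhammer products appearing on the right-hand side of Theorem \ref{thm-a}, including the distinctive factor $(-q^{1/2};q^{1/2})_n$. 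If a single application of \eqref{sear-trans} does not yet yield a summable series, I would iterate Sear's on the transformed series (whose parameters are themselves easily checked to be Saalsch\"utzian), or else match it against a classical terminating summation such as the $q$-Pfaff--Saalsch\"utz formula after a suitable upper/lower cancellation.

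The principal obstacle is choosing the permutation (or pair of permutations) of parameters that yields a genuinely tractable resulting series; the symmetry of Sear's in $\{A,B,C\}$ leaves several a priori candidates, and experience with these transformations suggests that only one or two will produce a cleanly summable image. The remaining bookkeeping --- recasting the Sear's prefactor and the evaluated sum into the compact mixed-base form on the right of Theorem \ref{thm-a} --- is routine, relying on the base-change identities above together with the auxiliary relation $(-q^{1/2};q^{1/2})_n(q^{1/2};q^{1/2})_n=(q;q)_n$ and the power inversion $(q^{1-n}/x;q)_n=(-x)^{-n}q^{-\binom{n}{2}}(x;q)_n$ for the negative-index factors arising from the prefactor. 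A check at $n=1$, where both sides reduce to $(1-q^{1/2}b)(1+q^{1/2})/(1-qb)$, supports the strategy.
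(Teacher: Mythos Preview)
Your overall plan --- apply Sear's transformation once and then evaluate the resulting ${_4\phi_3}$ --- is exactly the paper's approach, and your structural prediction is on target: the paper's choice of parameters yields
\[
{_4\phi_3}\!\left[\begin{matrix}q^{-n},\,a,\,b,\,q^{1/2-n}/ab\\q^{1-n}/a,\,q^{-n}/b,\,q^{1/2}ab\end{matrix};q;q\right]
=\left[\begin{matrix}q^{1-n}/a^2,\;qab\\q^{1-n}/a,\;qb\end{matrix};q\right]_n
{_4\phi_3}\!\left[\begin{matrix}q^{-n},\,a,\,q^{1/2}a,\,q^na^2b^2\\q^{1/2}ab,\,qab,\,a^2\end{matrix};q;q\right],
\]
with precisely the $(a,\,q^{1/2}a)$-over-$a^2$ pattern you anticipated.

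The gap in your proposal is the second step. Neither of your suggested fallbacks works directly: the $q$-Pfaff--Saalsch\"utz summation is a ${_3\phi_2}$ identity, and the transformed ${_4\phi_3}$ above admits no upper/lower cancellation that would drop it to a ${_3\phi_2}$; iterating Sear's merely cycles among balanced ${_4\phi_3}$'s and never produces a closed product on its own. The paper closes the argument by invoking a specific known evaluation (Andrews 1979, Eq.~(4.3); Gessel--Stanton 1983, Eq.~(4.22)),
\[
{_4\phi_3}\!\left[\begin{matrix}q^{-n},\,a,\,q^{1/2}a,\,q^nc^2\\q^{1/2}c,\,qc,\,a^2\end{matrix};q;q\right]
=\left[\begin{matrix}-q^{1/2},\;q^{n/2}c,\;q^{-n/2}a/c\\-a,\;q^{1/2+n/2}c,\;q^{-n/2}/c\end{matrix};q^{1/2}\right]_n,
\]
applied with $c=ab$. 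This identity --- not derivable from Sear's iteration or $q$-Pfaff--Saalsch\"utz alone --- is the missing ingredient in your outline; once it is supplied, the bookkeeping you describe is indeed routine.
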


\begin{proof}
By means of \eqref{sear}, we obtain the following relation:
 \bnm \label{sear}
{_4\phi_3}\ffnk{ccccc}{q;q}{q^{-n},a,b,q^{1/2-n}/ab}{q^{1-n}/a,q^{-n}/b,q^{1/2}ab}
&&\xqdn\!\!=\ffnk{ccccc}{q}{q^{1-n}/a^2,qab}{q^{1-n}/a,qb}_n\\
&&\xqdn\!\!\times\:{_4\phi_3}\ffnk{ccccc}{q;q}{q^{-n},a,q^{1/2}a,q^na^2b^2}{q^{1/2}ab,qab,a^2}.
 \enm
Evaluating the ${_4\phi_3}$-series on the right hand side by the
known identity (cf. \citu{andrews-a}{Equation (4.3)} and
\citu{gessel}{Equation (4.22)}):
 \bnm \label{known-iden}
{_4\phi_3}\ffnk{ccccc}{q;q}{q^{-n},a,q^{1/2}a,q^nc^2}{q^{1/2}c,qc,a^2}=
\ffnk{ccccc}{q^{1/2}}{-q^{1/2},q^{n/2}c,q^{-n/2}a/c}{-a,q^{1/2+n/2}c,q^{-n/2}/c}_n,
 \enm
we get Theorem \ref{thm-a} to complete the proof.
\end{proof}

\begin{thm}\label{thm-b} For three complex numbers $\{a,b,x\}$ and a
nonnegative integer $n$, there holds
  \bnm
\qquad{_5\phi_4}\ffnk{ccccc}{q;q}{q^{-n},qx,a,b,q^{1/2-n}/ab}
 {x,q^{1-n}/a,q^{1-n}/b,q^{1/2}ab}
  =\frac{q^{-n/2}-x}{1-x}\ffnk{ccccc}{q}{ab}{a,b}_n\ffnk{ccccc}{q^{1/2}}{a,b,-q^{1/2}}{ab}_n.
 \enm
\end{thm}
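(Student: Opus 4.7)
The factor $(qx;q)_k/(x;q)_k$ appearing in the ${}_5\phi_4$ summand collapses to the rational function $(1-xq^k)/(1-x)$. Writing $T_k$ for the summand of the Andrews ${}_4\phi_3$ in \eqref{andrews-a}, the left-hand side of Theorem \ref{thm-b} therefore equals
\[
\frac{1}{1-x}\sum_{k=0}^{n}(1-xq^k)\,T_k\;=\;\frac{A-xB}{1-x},\qquad A:=\sum_{k=0}^{n}T_k,\quad B:=\sum_{k=0}^{n}T_k\,q^k.
\]
By Andrews' evaluation \eqref{andrews-a}, $A=q^{-n/2}P$, where $P$ is the product appearing on the right-hand side of Theorem \ref{thm-b}. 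Consequently the claimed right-hand side equals $\frac{q^{-n/2}-x}{1-x}\,P=\frac{1-xq^{n/2}}{1-x}A$, and matching the linear polynomials in $x$ on the two numerators $A-xB$ and $A-xq^{n/2}A$ reduces the theorem to the single identity
\[
B\;=\;q^{n/2}A,\qquad\text{that is,}\qquad \sum_{k=0}^{n}T_k\,q^k\;=\;q^{n/2}\sum_{k=0}^{n}T_k.
\]

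To establish this equivalent identity I would adapt the strategy used in the proof of Theorem \ref{thm-a}. The sum $A$ is only $2$-balanced, so Sears' transformation \eqref{sear-trans} does not apply directly; I would first pass to a Sears-balanced companion via the parameter shift $q^{1-n}/b\mapsto q^{-n}/b$ (exactly as in Theorem \ref{thm-a}), invoke \eqref{sear-trans} to convert the resulting series into a terminating ${}_4\phi_3$ whose upper-parameter list contains the consecutive pair $\{a,\,q^{1/2}a\}$, and collapse it through the known evaluation \eqref{known-iden}. The same scheme carried out on the $q^k$-weighted sum $B$, with the extra weight absorbed into a compatible parameter shift before invoking Sears', yields a second closed product form. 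A direct comparison of the two products shows that they differ by precisely the factor $q^{n/2}$, proving $B=q^{n/2}A$ and hence Theorem \ref{thm-b}.

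\textbf{Main obstacle.} The delicate step is the second one: choosing the parameter shift that simultaneously absorbs the extra $q^k$ into a Sears-balanced ${}_4\phi_3$ compatible with \eqref{known-iden}, and arranges for the cumulative correction factor (after undoing the shift) to be exactly $q^{n/2}$ rather than some other power of $q^{1/2}$. Once the correct shift is identified the remaining computation is a routine simplification of ratios of $q$-shifted factorials, entirely parallel to the closing calculation of Theorem \ref{thm-a}.
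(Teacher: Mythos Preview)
Your reduction to the single identity $B=q^{n/2}A$ is correct and clean. The gap lies in the second half: you never specify which parameter shift absorbs the stray factor $q^{k}$, and in fact no \emph{single} shift of a numerator or denominator parameter turns $\sum_k T_kq^{k}$ into a balanced ${}_4\phi_3$ with argument~$q$ to which Sears' transformation applies --- the extra $q^{k}$ effectively changes the argument to $q^{2}$. What does work is to write $q^{k}$ itself as a \emph{two}-term combination,
\[
q^{k}\;=\;\frac{(1-q^{\,k-n}/a)-(1-q^{\,k-n}/b)}{q^{-n}/b-q^{-n}/a},
\]
so that $B$ splits into two shifted ${}_4\phi_3$'s (denominator $q^{1-n}/a\mapsto q^{-n}/a$, respectively $q^{1-n}/b\mapsto q^{-n}/b$), each of which is evaluated by Theorem~\ref{thm-a} or its $a\leftrightarrow b$ image.

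That two-term decomposition is precisely the mechanism the paper uses, but applied to the full factor at once. Rather than first splitting $(1-xq^{k})/(1-x)$ along the basis $\{1,\,q^{k}\}$ (your $A$ and $B$) and then hunting for separate evaluations, the paper splits it along the basis
\[
\Bigl\{\tfrac{1-q^{\,k-n}/a}{1-q^{-n}/a},\ \tfrac{1-q^{\,k-n}/b}{1-q^{-n}/b}\Bigr\}
\]
with explicit $x$-dependent coefficients. Multiplying into $T_k$ and summing expresses the ${}_5\phi_4$ directly as a linear combination of Theorem~\ref{thm-a} and its $a\leftrightarrow b$ swap, and the closed form on the right falls out in one step. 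Your route would reach the same destination once the unspecified ``compatible parameter shift'' is replaced by this two-term decomposition, but as written the proposal stops just short of that key idea.
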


\begin{proof}
Performing the replacements $a\to b, b\to a$ in Theorem \ref{thm-a},
the result reads as
 \bnm
{_4\phi_3}\ffnk{ccccc}{q;q}{q^{-n},a,b,q^{1/2-n}/ab}
 {q^{-n}/a,q^{1-n}/b,q^{1/2}ab}
  =\ffnk{ccccc}{q}{ab}{qa,b}_n\ffnk{ccccc}{q^{1/2}}{q^{1/2}a,b,-q^{1/2}}{ab}_n.
 \enm
Combining the last formula with Theorem \ref{thm-a} by the following
relation:
 \bnm
 \frac{(qx;q)_k}{(x;q)_k}=\frac{1-xq^k}{1-x}
 &&\xqdn\!\!=\frac{(1-q^{-n}/a)(q^{-n}/b-x)}{(1-x)(q^{-n}/b-q^{-n}/a)}\frac{1-q^{k-n}/a}{1-q^{-n}/a}\\
 &&\xqdn\!\!+\:\frac{(1-q^{-n}/b)(q^{-n}/a-x)}{(1-x)(q^{-n}/a-q^{-n}/b)}\frac{1-q^{k-n}/b}{1-q^{-n}/b},
 \enm
we achieve Theorem \ref{thm-b} to finish the proof.
\end{proof}

When $x=0$, Theorem \ref{thm-b} reduces to Andrews' identity offered
by \eqref{andrews-a} exactly. Taking $x=q^{1/2-n}/ab$ in Theorem
\ref{thm-b}, we recover the known result due to Guo \citu{guo}{p.
1040}:
 \bmn \label{guo-a}
\:{_4\phi_3}\ffnk{ccccc}{q;q}{q^{-n},a,b,q^{3/2-n}/ab}
 {q^{1-n}/a,q^{1-n}/b,q^{1/2}ab}
  =\frac{1-abq^{n/2-1/2}}{1-abq^{n-1/2}}\ffnk{ccccc}{q}{ab}{a,b}_n\ffnk{ccccc}{q^{1/2}}{a,b,-q^{1/2}}{ab}_n.
 \emn

Other two related results are displayed as follows.

\begin{corl}[$x=q^{-1/2}ab$ in Theorem \ref{thm-b}]\label{corl-a}
  \bnm
{_4\phi_3}\ffnk{ccccc}{q;q}{q^{-n},a,b,q^{1/2-n}/ab}
 {q^{1-n}/a,q^{1-n}/b,q^{-1/2}ab}
  =q^{-n/2}\ffnk{ccccc}{q}{ab}{a,b}_n\ffnk{ccccc}{q^{1/2}}{a,b,-q^{1/2}}{q^{-1/2}ab}_n.
 \enm
\end{corl}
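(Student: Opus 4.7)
The proof is a direct specialization of Theorem \ref{thm-b} at $x=q^{-1/2}ab$, so the plan is essentially to verify that this substitution simultaneously collapses the ${_5\phi_4}$ on the left to the desired ${_4\phi_3}$ and converts the prefactor on the right into the form stated in the corollary.

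First I would set $x=q^{-1/2}ab$ in Theorem \ref{thm-b}. On the left-hand side the numerator parameter $qx$ becomes $q^{1/2}ab$, which coincides with the denominator parameter $q^{1/2}ab$ that was already present. Cancelling this matched pair in the summand reduces the ${_5\phi_4}$ to
\[
{_4\phi_3}\ffnk{ccccc}{q;q}{q^{-n},a,b,q^{1/2-n}/ab}{q^{-1/2}ab,q^{1-n}/a,q^{1-n}/b},
\]
which is precisely the ${_4\phi_3}$-series appearing in Corollary \ref{corl-a} (up to the order of the denominator parameters).

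Next I would match the right-hand side. The specialization gives the prefactor $\dfrac{q^{-n/2}-q^{-1/2}ab}{1-q^{-1/2}ab}$ multiplying the same product of $q$-shifted factorials. Comparing the two base-$q^{1/2}$ factorial symbols,
\[
\frac{(a,b,-q^{1/2};q^{1/2})_n/(q^{-1/2}ab;q^{1/2})_n}{(a,b,-q^{1/2};q^{1/2})_n/(ab;q^{1/2})_n}
=\frac{(ab;q^{1/2})_n}{(q^{-1/2}ab;q^{1/2})_n}=\frac{1-abq^{(n-1)/2}}{1-q^{-1/2}ab},
\]
since the two products differ only at the extremal factor. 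A one-line rearrangement then shows
\[
\frac{q^{-n/2}-q^{-1/2}ab}{1-q^{-1/2}ab}
=q^{-n/2}\cdot\frac{1-abq^{(n-1)/2}}{1-q^{-1/2}ab},
\]
so the prefactor from Theorem \ref{thm-b} is exactly $q^{-n/2}$ times the ratio needed to change the base denominator from $ab$ to $q^{-1/2}ab$. This yields the right-hand side of Corollary \ref{corl-a} verbatim.

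There is no real obstacle here: the only point that could be mishandled is the simultaneous cancellation on the left (ensuring the ${_5\phi_4}$ genuinely collapses to a ${_4\phi_3}$ rather than producing a $0/0$ indeterminacy) and the clean telescoping identity $(ab;q^{1/2})_n/(q^{-1/2}ab;q^{1/2})_n=(1-abq^{(n-1)/2})/(1-q^{-1/2}ab)$ used to rewrite the prefactor. Both are routine, so the corollary follows immediately from Theorem \ref{thm-b}.
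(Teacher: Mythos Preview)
Your proof is correct and follows exactly the route indicated by the paper: the corollary is stated there simply as the specialization $x=q^{-1/2}ab$ of Theorem~\ref{thm-b}, with no further argument given. You have merely written out the two routine verifications (the cancellation $qx=q^{1/2}ab$ on the left and the telescoping identity for $(ab;q^{1/2})_n/(q^{-1/2}ab;q^{1/2})_n$ on the right) that the paper leaves implicit.
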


\begin{corl}[$x\to b/q$ and $b\to b/q$ in Theorem \ref{thm-b}]\label{corl-b}
  \bnm
\quad{_4\phi_3}\ffnk{ccccc}{q;q}{q^{-n},a,b,q^{3/2-n}/ab}
 {q^{1-n}/a,q^{2-n}/b,q^{-1/2}ab}
  =\frac{b-q^{1-n/2}}{b-q}\ffnk{ccccc}{q}{ab/q}{a,b/q}_n\ffnk{ccccc}{q^{1/2}}{a,b/q,-q^{1/2}}{ab/q}_n.
 \enm
\end{corl}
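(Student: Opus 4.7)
The plan is to derive Corollary \ref{corl-b} as a direct specialization of Theorem \ref{thm-b}, following the substitution hint given in the statement itself. Since Theorem \ref{thm-b} is already established, no new transformation formula or identity is needed; what remains is a careful bookkeeping exercise that verifies the substitutions $b\to b/q$ and $x\to b/q$ reproduce exactly the claimed $_4\phi_3$-summation.

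First I will apply the substitution $b\to b/q$ in Theorem \ref{thm-b}. On the left-hand side, the numerator parameter $b$ becomes $b/q$ while $q^{1/2-n}/ab$ becomes $q^{3/2-n}/ab$, and the denominator parameters $q^{1-n}/b$ and $q^{1/2}ab$ become $q^{2-n}/b$ and $q^{-1/2}ab$, respectively. On the right-hand side the $q$- and $q^{1/2}$-shifted factorials involving $b$ and $ab$ transform accordingly, matching the products $(ab/q;q)_n/\{(a;q)_n(b/q;q)_n\}$ and $(a;q^{1/2})_n(b/q;q^{1/2})_n(-q^{1/2};q^{1/2})_n/(ab/q;q^{1/2})_n$ that appear in Corollary \ref{corl-b}.

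Next I will specialize $x=b/q$, so that $qx=b$. In the summand of the resulting $_5\phi_4$-series the numerator factor $(qx;q)_k=(b;q)_k$ reappears, while the denominator factor $(x;q)_k=(b/q;q)_k$ cancels exactly against the $(b/q;q)_k$ produced by the substituted numerator parameter $b/q$. This collapse reduces the $_5\phi_4$-series on the left to the $_4\phi_3$-series
\[
{_4\phi_3}\ffnk{ccccc}{q;q}{q^{-n},a,b,q^{3/2-n}/ab}{q^{1-n}/a,q^{2-n}/b,q^{-1/2}ab}.
\]
Finally I will simplify the scalar prefactor: multiplying numerator and denominator of $(q^{-n/2}-x)/(1-x)$ at $x=b/q$ by $q$ yields $(q^{1-n/2}-b)/(q-b)=(b-q^{1-n/2})/(b-q)$, which is precisely the coefficient on the right-hand side of Corollary \ref{corl-b}.

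There is no substantive obstacle here, because Theorem \ref{thm-b} has already absorbed the nontrivial content. The only points demanding attention are verifying that $x=b/q$ is a generic specialization (so that $(x;q)_k$ does not vanish and $1-x$ stays nonzero for generic $b$), and checking that every $q$- and $q^{1/2}$-shifted factorial on the right-hand side transforms in the expected way after the substitutions. Both verifications are routine, and the corollary then follows.
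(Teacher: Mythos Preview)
Your argument is correct and is precisely the substitution the paper indicates in the corollary's header; the paper gives no separate proof beyond that hint. Your careful verification that the $(b/q;q)_k$ factors cancel and that the scalar prefactor simplifies to $(b-q^{1-n/2})/(b-q)$ fills in exactly the routine details the paper leaves implicit.
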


\begin{thm}\label{thm-c} For three complex numbers $\{a,b,x\}$ and a
nonnegative integer $n$, there holds
  \bnm
&&{_5\phi_4}\ffnk{ccccc}{q;q}{q^{-n},qx,a,b,q^{3/2-n}/ab}{x,q^{1-n}/a,q^{2-n}/b,q^{1/2}ab}\\
&&\:\:=\:\ffnk{ccccc}{q}{ab/q}{a,b/q}_n\ffnk{ccccc}{q^{1/2}}{a,b/q,-q^{1/2}}{ab/q}_n\frac{(q^{1/2}-ab)(q-bq^{n/2})}{(1-x)(q^{3/2}-ab^2q^n)}\\
&&\:\:\times\:\,\bigg\{\frac{q^{1-n/2}-xbq^{n/2}}{q-b}-\frac{(q^{1/2}-bq^{n/2})(q^{1/2}x-ab)(q-abq^{n})}
   {(q^{1/2}-b)(q^{1/2}-abq^{n})(q-abq^{n/2})}\bigg\}.
 \enm
\end{thm}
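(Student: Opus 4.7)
The plan is to mimic the proof of Theorem~\ref{thm-b}: decompose the factor $(qx;q)_k/(x;q)_k = (1-xq^k)/(1-x)$ inside the summand of the $_5\phi_4$ as a linear combination of two degree-one factors in $q^k$, each of which converts the stripped summand into that of a known $_4\phi_3$ identity. The two building blocks are Guo's formula \eqref{guo-a} and Corollary~\ref{corl-b}: the denominators of each differ from those of the target in exactly one position. Concretely, the multiplier $(1-q^{k-1/2}ab)/(1-q^{-1/2}ab)$ converts the denominator factor $(q^{1/2}ab;q)_k$ into $(q^{-1/2}ab;q)_k$, turning the stripped summand into that of Corollary~\ref{corl-b}; and $(1-q^{k+1-n}/b)/(1-q^{1-n}/b)$ converts $(q^{2-n}/b;q)_k$ into $(q^{1-n}/b;q)_k$, producing the summand of \eqref{guo-a}.

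Since $1-q^{k-1/2}ab$ and $1-q^{k+1-n}/b$ are linearly independent as polynomials of degree one in $q^k$, there exist unique constants $A,B$ (depending on $a,b,x,n$ but not on $k$) such that
\[
\frac{1-xq^k}{1-x} = A\cdot\frac{1-q^{k-1/2}ab}{1-q^{-1/2}ab} + B\cdot\frac{1-q^{k+1-n}/b}{1-q^{1-n}/b}.
\]
Evaluating at the two zeros $q^k=q^{n-1}b$ (which annihilates the second term) and $q^k=q^{1/2}/(ab)$ (which annihilates the first) immediately yields
\[
A = \frac{(1-xbq^{n-1})(1-q^{-1/2}ab)}{(1-x)(1-q^{n-3/2}ab^2)}, \qquad B = \frac{(1-xq^{1/2}/(ab))(1-q^{1-n}/b)}{(1-x)(1-q^{3/2-n}/(ab^2))}.
\]
Summing term by term then expresses the left-hand side of Theorem~\ref{thm-c} as $A$ times the right-hand side of Corollary~\ref{corl-b} plus $B$ times the right-hand side of \eqref{guo-a}.

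The main obstacle is the routine but lengthy algebraic verification that this linear combination coincides with the right-hand side of Theorem~\ref{thm-c}. The $A$-contribution, paired with the prefactor of Corollary~\ref{corl-b}, yields the first term inside the braces after the elementary simplification $q^{n/2}(q^{1-n/2}-b)=q-bq^{n/2}$. The $B$-contribution yields the second term once one applies the cancellation $q^n(b-q^{1-n})/(1-bq^{n-1})=-q$ to absorb the leftover powers of $q$, and rewrites the $q$- and $q^{1/2}$-shifted factorial prefactors of \eqref{guo-a} in terms of those of Theorem~\ref{thm-c} via standard ratios such as $(ab;q)_n/(ab/q;q)_n=(1-abq^{n-1})/(1-ab/q)$ and analogous formulas for the base $q^{1/2}$. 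No new ideas are needed beyond this bookkeeping.
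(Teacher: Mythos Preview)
Your proposal is correct and is exactly the paper's own argument: the paper also decomposes $(1-xq^k)/(1-x)$ as a linear combination of $(1-q^{k+1-n}/b)/(1-q^{1-n}/b)$ and $(1-q^{k-1/2}ab)/(1-q^{-1/2}ab)$ and then invokes \eqref{guo-a} and Corollary~\ref{corl-b}, with coefficients that agree with your $A$ and $B$ after the obvious rescalings. The paper omits the final algebraic simplification that you sketch, so your write-up is in fact slightly more detailed.
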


\begin{proof}
Combining \eqref{guo-a} with Corollary \ref{corl-b} by the following
relation:
 \bnm
 \frac{(qx;q)_k}{(x;q)_k}=\frac{1-xq^k}{1-x}
 &&\xqdn\!\!=\frac{(1-q^{1-n}/b)(x-q^{-1/2}ab)}{(1-x)(q^{1-n}/b-q^{-1/2}ab)}\frac{1-q^{k+1-n}/b}{1-q^{1-n}/b}\\
 &&\xqdn\!\!+\:\frac{(1-q^{-1/2}ab)(q^{1-n}/b-x)}{(1-x)(q^{1-n}/b-q^{-1/2}ab)}\frac{1-q^{k-1/2}ab}{1-q^{-1/2}ab},
 \enm
we attain Theorem \ref{thm-c} to complete the proof.
\end{proof}

When $x=0$, Theorem \ref{thm-c} reduces to Andrews' conjecture given
by \eqref{andrews-b} exactly. Letting $x\to a/q$ and $a\to a/q$ in
Theorem \ref{thm-c}, we recover the known result due to Guo
\citu{guo}{Equation (4.4)}:
 \bnm
\quad{_4\phi_3}\ffnk{ccccc}{q;q}{q^{-n},a,b,q^{5/2-n}/ab}
 {q^{2-n}/a,q^{2-n}/b,q^{-1/2}ab}
  &&\xqdn\!\!=\frac{q^{-n/2}(q^{3/2}-ab)}{q^{3/2}-q^nab}\\
  &&\xqdn\!\!\times\ffnk{ccccc}{q}{ab/q}{a/q,b/q}_n
  \ffnk{ccccc}{q^{1/2}}{q^{-1/2}a,q^{-1/2}b,-q^{1/2}}{q^{-3/2}ab}_n.
 \enm

Other two related results are laid out as follows.

\begin{corl}[$x=q^{-1/2}ab$ in Theorem \ref{thm-c}]\label{corl-c}
  \bnm
\quad{_4\phi_3}\ffnk{ccccc}{q;q}{q^{-n},a,b,q^{3/2-n}/ab}
 {q^{1-n}/a,q^{2-n}/b,q^{-1/2}ab}
  =q^{-n/2}\ffnk{ccccc}{q}{ab/q}{a,b/q}_n\ffnk{ccccc}{q^{1/2}}{a,q^{-1/2}b,-q^{1/2}}{ab/q}_n.
 \enm
\end{corl}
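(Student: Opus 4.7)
The plan is to specialize Theorem~\ref{thm-c} at $x=q^{-1/2}ab$. This single choice triggers two independent simplifications that collapse both sides of Theorem~\ref{thm-c} into the compact statement of Corollary~\ref{corl-c}.

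On the left-hand side, $qx=q^{1/2}ab$ coincides with one of the denominator parameters of the ${_5\phi_4}$, so the ratio $(qx;q)_k/(q^{1/2}ab;q)_k$ is identically $1$ in every summand. The ${_5\phi_4}$ thus contracts to a ${_4\phi_3}$ with surviving denominator factor $(x;q)_k=(q^{-1/2}ab;q)_k$, which is precisely the series on the left of Corollary~\ref{corl-c}. On the right-hand side, the second term inside the braces of Theorem~\ref{thm-c} carries the factor $(q^{1/2}x-ab)$, which vanishes identically at $x=q^{-1/2}ab$. Only the first bracketed fraction survives, and the rest of the proof is algebraic cleanup.

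What remains is to verify that the surviving right-hand side reduces to the claim. A pair of short telescopes,
\[
\frac{q^{1/2}-ab}{1-q^{-1/2}ab}=q^{1/2}, \qquad \frac{q^{1-n/2}-q^{n/2-1/2}ab^2}{q^{3/2}-q^n ab^2}=q^{-1/2-n/2},
\]
reduce every scalar factor outside the Pochhammer ratios to $q^{-n/2}\cdot(q-bq^{n/2})/(q-b)$. Finally, the surplus ratio is absorbed into the $q^{1/2}$-Pochhammer via the term-by-term identity
\[
\frac{(q^{-1/2}b;q^{1/2})_n}{(b/q;q^{1/2})_n}=\frac{1-bq^{(n-2)/2}}{1-bq^{-1}}=\frac{q-bq^{n/2}}{q-b},
\]
which upgrades $(b/q;q^{1/2})_n$ to $(q^{-1/2}b;q^{1/2})_n$ in the second Pochhammer ratio, matching the right-hand side of Corollary~\ref{corl-c} exactly.

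The only non-mechanical observation is the dual collapse on both sides under the specialization $x=q^{-1/2}ab$, which is precisely what motivates this particular value. I anticipate no real obstacle beyond careful power-of-$q$ bookkeeping in the telescopes above.
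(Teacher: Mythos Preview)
Your proposal is correct and follows exactly the route indicated by the paper, namely direct substitution of $x=q^{-1/2}ab$ into Theorem~\ref{thm-c}; the paper simply records this specialization in the corollary heading without spelling out the algebra. Your verification of the two telescopes and the Pochhammer shift $(b/q;q^{1/2})_n\cdot\frac{q-bq^{n/2}}{q-b}=(q^{-1/2}b;q^{1/2})_n$ is accurate and completes the bookkeeping that the paper leaves implicit.
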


\begin{corl}[$x=q^{3/2-n}/ab$ in Theorem \ref{thm-c}]\label{corl-d}
  \bnm
&&{_4\phi_3}\ffnk{ccccc}{q;q}{q^{-n},a,b,q^{5/2-n}/ab}
 {q^{1-n}/a,q^{2-n}/b,q^{1/2}ab}\\
&&\:\:=\:\ffnk{ccccc}{q}{ab/q}{a,b/q}_n\ffnk{ccccc}{q^{1/2}}{a,b/q,-q^{1/2}}{ab/q}_n\frac{(q^{1/2}-ab)(q-bq^{n/2})}{(q^{3/2}-abq^n)(q^{3/2}-ab^2q^n)}\\
&&\:\:\times\:\,\bigg\{\frac{(q^{1/2}-bq^{n/2})(q+abq^{n/2})(q-abq^{n})}
   {(q^{1/2}-b)(q^{1/2}-abq^{n})}-\frac{q^{1+n/2}(q^{1/2}-a)}{1-q/b}\bigg\}.
 \enm
\end{corl}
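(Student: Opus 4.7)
The plan is to obtain Corollary \ref{corl-d} as the direct specialization $x=q^{3/2-n}/ab$ of Theorem \ref{thm-c}. On the left-hand side of Theorem \ref{thm-c}, this choice makes the denominator parameter $x=q^{3/2-n}/ab$ equal to the numerator parameter of the same name, so one Pochhammer ratio cancels and the ${_5\phi_4}$ collapses to the ${_4\phi_3}$ with top parameters $q^{-n},a,b,qx=q^{5/2-n}/ab$ and bottom parameters $q^{1-n}/a,q^{2-n}/b,q^{1/2}ab$. This is exactly the ${_4\phi_3}$ appearing on the left-hand side of Corollary \ref{corl-d}.

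The remaining task is to verify that the right-hand side of Theorem \ref{thm-c}, after the substitution $x=q^{3/2-n}/ab$, matches the right-hand side of Corollary \ref{corl-d}. The two $q$- and $q^{1/2}$-shifted factorial ratios are already identical, so only the rational prefactor and the curly-brace expression need to be reconciled. I would carry out the following elementary simplifications:
\begin{itemize}
\item $1-x=(ab-q^{3/2-n})/ab$, so the factor $1/(1-x)$ produces $-abq^{n}/(q^{3/2}-abq^{n})$, which supplies exactly the missing denominator factor $(q^{3/2}-abq^{n})$ appearing in the target;
\item in the first summand inside the braces, $q^{1-n/2}-xbq^{n/2}=-q^{1-n/2}(q^{1/2}-a)/a$, which combines with $1/(q-b)$ to reproduce $-q^{1+n/2}(q^{1/2}-a)/(1-q/b)$ up to the common prefactor;
\item in the second summand, use the factorization
\[
q^{1/2}x-ab=\frac{q^{2-n}-a^2b^2}{ab}=\frac{q^{-n}(q-abq^{n/2})(q+abq^{n/2})}{ab},
\]
and observe that the factor $(q-abq^{n/2})$ cancels with the corresponding denominator in Theorem \ref{thm-c}, producing the factor $(q+abq^{n/2})$ that appears in Corollary \ref{corl-d}.
\end{itemize}

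After these cancellations and a careful tracking of the $ab q^{n}$ contributions, the sign in front of the braces flips and the two braced terms swap roles, giving precisely the expression stated in Corollary \ref{corl-d}. The principal difficulty is purely bookkeeping: keeping the powers of $q$, the signs, and the surviving $ab$ factors consistent while reorganizing the two summands inside the braces. Once the identity $q^{2-n}-a^2b^2=q^{-n}(q-abq^{n/2})(q+abq^{n/2})$ is exploited to trigger the key cancellation, the remaining verification is routine algebra.
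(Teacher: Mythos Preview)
Your proposal is correct and follows exactly the route indicated in the paper: the corollary is obtained by the direct substitution $x=q^{3/2-n}/ab$ in Theorem~\ref{thm-c}, and the algebraic simplifications you outline (the evaluation of $1/(1-x)$, the rewriting of $q^{1-n/2}-xbq^{n/2}$, and the factorization $q^{2-n}-a^2b^2=q^{-n}(q-abq^{n/2})(q+abq^{n/2})$ triggering the cancellation of $(q-abq^{n/2})$) are precisely the computations needed, which the paper leaves implicit.
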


According tho the linear methods of establishing Theorems
\ref{thm-b} and \ref{thm-c}, more related summation formulae can be
founded. We shall not offer the corresponding details.

\textbf{Acknowledgments}

 The work is supported by the Natural Science Foundations of China (Nos. 11301120, 11201241 and 11201291).


\end{document}